\documentclass[12pt, a4paper]{amsart}

\setlength{\textwidth}{6.4truein}              
\setlength{\oddsidemargin}{0truein}
\setlength{\evensidemargin}{0truein}

\usepackage{a4wide}
\usepackage[english]{babel}
\usepackage[T2A]{fontenc}
\usepackage[utf8]{inputenc} 
\usepackage{amsfonts}
\usepackage{amssymb, amsthm, amscd}
\usepackage{amsmath}
\usepackage{mathtools}
\usepackage{needspace}
\usepackage{etoolbox}
\usepackage{lipsum}
\usepackage{comment}
\usepackage{cmap}
\usepackage[pdftex]{graphicx}
\usepackage[unicode]{hyperref}
\usepackage[matrix,arrow,curve]{xy}
\usepackage[usenames,dvipsnames]{xcolor}
\usepackage{colortbl}
\usepackage{textcomp}
\usepackage{cite}
\usepackage{euscript}

\pagestyle{plain}

\sloppy

\makeatletter
\def\@settitle{\begin{center}
		\baselineskip14\p@\relax
		\bfseries
		\LARGE
		\@title
	\end{center}
}
\makeatother

\newtheorem{theorem}{Theorem}
\newtheorem{lemma}{Lemma}

\theoremstyle{definition}
\newtheorem{definition}{Definition}

\renewcommand{\geq}{\geqslant}
\renewcommand{\leq}{\leqslant}

\newcommand{\Q}{\Omega}

\title{On generating sets of infinite symmetric group}
\author{Andrei V. Semenov, Aleksandra Denisova}

\thanks{First author was supported by Young Russian Mathematics award and by ``Native Towns'', a social investment program of PJSC ``Gazprom Neft''}

\address{Andrei V. Semenov:
Chebyshev Laboratory, 
St. Petersburg State University, 14th Line V.O., 29B, 
Saint Petersburg 199178 Russia}

\email{asemenov.spb.56@gmail.com}

\begin{document}
\maketitle

\begin{abstract}
It was shown that in a group of bijections of an infinite set some families of subsets, related to the cardinality of some eigenspaces, are generating. Besides, we derived a criterion for generating by sets of this kind.
\end{abstract}
\vspace{10mm}

\section{Introduction} The symmetric group $S(\Q)$ (for the arbitrary infinite set $\Q$) is a "large" group, which means that it is difficult to study even particular cases. Thus, hardly anything is known about the generating sets of this group. George Bergman studied generating sets in some special cases (see lemmas 1-4 in \cite{1}) and made a few abstract conclusions about this group and its generating sets (see also Theorem 5 in \cite{1}).
Manfred Droste studied classes of words and chains of subgroups in $S(\Q)$ in his papers \cite{6, 7, 8}. Unfortunately, most of these results are too abstract to be applied to particular problems. \par
The situation is different for the classification of simplier objects: for example, all normal subgroups $S(\Q)$ have been classified (see \S 8 in \cite{2}). Also, the majority of results about the "classical" objects of the group theory have been delivered (see \cite{3} and \cite{4}).  \par
This paper considers the problem of generating certain infinite symmetric groups $S(\Q)$ for the arbitrary infinite set $\Q$. It is shown that in an infinite group of bijections some families of subsets, connected with the number of proper eigenspaces and their structure, are generating. We derived a criterion of generating for subsets of this kind, therefore solving a practical problem of generating in $S(\Q)$ for families of subsets of this structure.

\section{Definitions and established facts}
In this paper we fix the infinite set $\Q$, its group of bijection $S(\Q)$ and its cardinality $|\Q|$. For obvious reasons, $\Q$ stands for the set and for its cardinality. Also, $I_n$ denotes a set of elements of order $n$ in $S(\Q)$.

\begin{definition} The subset $ U \subset \Q$ is identified as {\it f-eigenspace } if $f(x) \in U$ for all $x\in U$. 
\end{definition}

\begin{definition} Let $ M_f$ denote a set of all non-stable elements of $f \in S(\Q)$.
\end{definition}

\begin{definition}
Let $W_{\alpha, \beta} (\Q)$ denote a set of all permutations $f \in S(\Q)$, such that $ M_f$ can be expressed as a disjoint union of at most $\alpha$ $f$-eigenspaces, such that the cardinality of each eigenspace is less than (or equal to) $\beta$.
\end{definition}

\begin{definition}
Let $K_{\alpha, \beta} (\Q)$ denote a set of all permutations $f \in S(\Q)$, such that $ M_f$ can be expressed as a disjoint union of $\alpha$ $f$-eigenspaces, such that the cardinality of each eigenspace is less than (or equal to) $\beta$.
\end{definition}

\begin{definition}
Let $R_{\alpha, \beta} (\Q)$ denote a set of all permutations $f \in S(\Q)$, such that $ M_f$ can be expressed as a disjoint union of at most $\alpha$ $f$-eigenspaces, such that the cardinality of each eigenspace is equal to $\beta$.
\end{definition}

\begin{definition}
Let $S_{\alpha, \beta} (\Q)$ denote a set of all permutations $f \in S(\Q)$, such that $ M_f$ can be expressed as a disjoint union of $\alpha$ $f$-eigenspaces, such that the cardinality of each eigenspace is equal to $\beta$.
\end{definition}

\begin{theorem}[\cite{1}, Theorem 5]
Let $\Q$ be an infinite set, and consider a chain of subgroups $\{ G_i\}_{i \in I} \leq S(\Q)$. If $|I| \leq |\Q|$ and $\bigcup_{i \in I} G_i = S(\Q)$, then there exists such an index $i$ that $G_j = S(\Q)$ for any $j\geq i $.
\end{theorem}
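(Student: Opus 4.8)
The plan is to recast the statement in the form standard for cofinality arguments: it suffices to show that $S(\Q)$ cannot be written as the union of a strictly increasing chain of proper subgroups indexed by a set of cardinality at most $|\Q|$. Indeed, in any chain $\{G_i\}_{i\in I}$ the set $J=\{i\in I : G_i=S(\Q)\}$ is automatically upward closed, since $i\in J$ and $j\geq i$ force $G_j\supseteq G_i=S(\Q)$. Thus the conclusion ``there is $i$ with $G_j=S(\Q)$ for all $j\geq i$'' is equivalent to the single assertion $J\neq\varnothing$, i.e. that some member of the chain is already all of $S(\Q)$.

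First I would reduce to a well-behaved chain. Assume for contradiction that every $G_i$ is proper. Passing to a cofinal subchain, I may re-index by a regular cardinal $\lambda\leq|I|\leq|\Q|$ and assume the chain is $\{G_\alpha\}_{\alpha<\lambda}$, strictly increasing, with every $G_\alpha$ proper and $\bigcup_{\alpha<\lambda}G_\alpha=S(\Q)$. If $\lambda$ were finite the chain would have a largest element equal to $S(\Q)$, contradicting properness, so $\lambda$ is an infinite regular cardinal. Since $\lambda\leq|\Q|$ and $\Q$ is infinite, I may also fix a partition $\Q=\bigsqcup_{\alpha<\lambda}\Q_\alpha$ into blocks with $|\Q_\alpha|=|\Q|$ for every $\alpha$.

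The heart of the argument is to manufacture from the chain a single permutation that cannot be absorbed at any level. Using properness, for each $\alpha$ I would select a permutation witnessing $G_\alpha\neq S(\Q)$ and arrange, after possibly re-choosing which block is assigned to which index, a \emph{localized} witness $g_\alpha\in S(\Q)$ supported essentially on $\Q_\alpha$ (or on $\Q_\alpha$ together with a fixed reference block) with $g_\alpha\notin G_\alpha$. Because the blocks are disjoint I can amalgamate these into one permutation $g=\prod_{\alpha<\lambda}g_\alpha$ acting blockwise. Since $g\in S(\Q)=\bigcup_{\beta<\lambda}G_\beta$, there is a fixed level $\beta_0$ with $g\in G_{\beta_0}$. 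The contradiction then comes from recovering an individual factor $g_\alpha$ with $\alpha>\beta_0$ out of $g$ by group operations that stay inside $G_{\beta_0}$: for then $g_\alpha\in G_{\beta_0}\subseteq G_\alpha$, contradicting $g_\alpha\notin G_\alpha$. The extraction would be performed by a conjugation/commutator device, conjugating $g$ by block-permuting elements that can be taken to lie in $G_{\beta_0}$ and forming products that cancel all factors except the one indexed by $\alpha$.

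The main obstacle, and the step requiring Bergman's support lemmas (Lemmas 1--4 of \cite{1}), is precisely this extraction. One must guarantee both that genuinely localized witnesses $g_\alpha\notin G_\alpha$ exist, controlling the possibility that $G_\alpha$ already contains the full symmetric group of a block, and that the conjugators used to isolate a single factor can themselves be chosen inside $G_{\beta_0}$, so that the isolated factor is forced back into $G_{\beta_0}$. This is where the algebraic structure of $S(\Q)$ is essential, since a naive diagonalization fails because the $G_\alpha$ need not be closed in the topology of pointwise convergence; I expect this part to absorb essentially all of the work, the reduction and the final contradiction being routine by comparison.
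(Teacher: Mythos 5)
This statement is quoted from Bergman \cite{1} (it is essentially Macpherson--Neumann's theorem on chains of proper subgroups of $S(\Q)$); the paper records it in the ``established facts'' section and gives no proof, so there is nothing in the paper to compare your argument against line by line. Judged on its own, your proposal has the right skeleton --- the observation that the conclusion is equivalent to ``some $G_i=S(\Q)$'', the reduction to a strictly increasing chain indexed by a regular cardinal $\lambda\leq|\Q|$, the partition of $\Q$ into $\lambda$ blocks of full cardinality, the blockwise amalgamation, and the pigeonhole step placing the amalgam in a single $G_{\beta_0}$ are all genuinely how the known proof is organized.

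There is, however, a concrete flaw in the one step you do commit to, beyond the parts you explicitly defer. Properness of $G_\alpha$ does \emph{not} yield a ``localized witness'' $g_\alpha\notin G_\alpha$ supported on $\Q_\alpha$ (or on $\Q_\alpha$ together with a reference block): a proper subgroup can contain every permutation supported on every single block. For instance, the setwise stabilizer of the partition $\{\Q_\alpha\}_{\alpha<\lambda}$ is proper yet contains the full symmetric group of each block, and the pointwise stabilizer of one point contains the full symmetric group of all but one block; no re-assignment of blocks to indices repairs this. The correct route is the reverse of yours: take an \emph{arbitrary} witness $g_\alpha\in S(\Q)\setminus G_\alpha$, transplant it into $\Q_\alpha$ by conjugating with a bijection $\Q\to\Q_\alpha$, amalgamate the transplanted copies into one permutation $h$, and only then use the support lemmas of \cite{1} (equivalently, Macpherson--Neumann's lemmas) both to extract the single factor $h|_{\Q_\alpha}$ from $h$ inside a bounded level of the chain and to conjugate it back to $g_\alpha$, the extracting and conjugating elements themselves having to be absorbed by the chain at levels independent of $\alpha$. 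Since you acknowledge that this extraction ``absorbs essentially all of the work'' and do not carry it out, the proposal is an outline of the standard strategy with one misdirected step rather than a proof.
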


\begin{theorem}[Schreier-Ulam, see \cite{2}] For any normal subgroup $H \leq S(\Q)$ there exists such a cardinal $\alpha$, such that $H = S_\alpha(\Q)=\{f\in S(\Q) \mid |M_f| \leq\alpha \leq \Q\}$.
\end{theorem}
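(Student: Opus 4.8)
The plan is to prove the classification by two inclusions, after normalizing via conjugacy. First I would record the standard fact that in $S(\Q)$ two permutations are conjugate precisely when they have the same cycle type (the same number of cycles of each length, fixed points included); consequently a normal subgroup $H$ is exactly a union of full conjugacy classes, so membership of $f$ in $H$ depends only on the cycle type of $f$. Assuming $H \neq \{e\}$, I would set
\[
\alpha = \sup\{\,|M_f| : f \in H\,\},
\]
and take $S_\alpha(\Q) = \{f \in S(\Q) : |M_f| \le \alpha\}$. For infinite $\alpha$ this is a subgroup (since $|M_{fg}| \le |M_f| + |M_g| = \alpha$) which is normal (conjugation preserves $|M_f|$), and the inclusion $H \subseteq S_\alpha(\Q)$ is immediate from the definition of $\alpha$. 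All the content lies in the reverse inclusion $S_\alpha(\Q) \subseteq H$.

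For the reverse inclusion I would reduce to involutions. The key classical input is that every permutation $g$ is a product of two involutions $g = t_1 t_2$ with $M_{t_1}, M_{t_2} \subseteq M_g$ (reverse each finite cycle and each $\Z$-cycle about a fixed axis), so $|M_{t_i}| \le |M_g|$. Hence it suffices to prove that every involution $t$ with $|M_t| \le \alpha$ lies in $H$. Since all involutions with the same number of transpositions are conjugate, and since the product of one such involution with a conjugate whose transpositions form a subfamily of its own is again an involution, supported on the complementary transpositions, the number of transpositions can be lowered at will inside $H$. It is therefore enough to exhibit a single involution whose support has the maximal size $\alpha$ inside $H$.

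The core step is thus to manufacture, from one $f \in H$ with $|M_f| = \kappa$ (infinite), a rich supply of small-support elements. Here I would invoke a relocation lemma: since $f$ moves every point of $M_f$, one can choose $\Delta \subseteq M_f$ with $|\Delta| = \kappa$ and $\Delta \cap f(\Delta) = \emptyset$ (select, cycle by cycle, a subset of each cycle's support that is displaced off itself; these still number $\kappa$). Conjugating $f$ by an involution $\tau$ that slides $\Delta$ onto a fresh disjoint copy produces $f' = \tau f \tau^{-1} \in H$, and the product $f' f^{-1} \in H$ is a permutation whose action off the relocated points largely cancels, leaving support of size $\le \kappa$ of controllable cycle type. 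A finite iteration of such conjugate-products, together with the transposition-count adjustment of the previous paragraph, deposits in $H$ an involution of every support size $\le \kappa$; by the involution reduction this yields $S_\alpha(\Q) \subseteq H$.

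The main obstacle is the cardinal bookkeeping needed to run the relocation lemma, together with two boundary issues. When $\kappa = |\Q|$ there is no room outside $M_f$ to place the fresh copy of $\Delta$; the standard fix is to split $\Q$ into two halves each of cardinality $|\Q|$, perform the construction inside one half while the other supplies the spare points, and transport by conjugation. Secondly, one must treat the case where the supremum $\alpha$ is a limit cardinal not attained by any $f \in H$ (then $H = \{f : |M_f| < \alpha\}$), and the finitary regime, where permutations of finite support form the separate normal subgroups $S_{\mathrm{fin}}$ and the alternating $A_{\mathrm{fin}}$, handled by the parallel fact that the normal closure of a single $3$-cycle is $A_{\mathrm{fin}}$. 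These low-end exceptions are exactly where the clean ``$H = S_\alpha(\Q)$'' shape requires caveats, and pinning them down is the delicate part of the argument.
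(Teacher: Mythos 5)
The paper offers no proof of this statement: it is imported from \cite{2} as a known classification, so there is nothing in-paper to compare your argument against; the review below judges your sketch on its own terms. Your outline does follow the standard route (conjugacy classes in $S(\Q)$ determined by cycle type, reduction to involutions via the product-of-two-involutions decomposition, and a relocation/cancellation trick applied to an element of maximal support), and those ingredients are all correct. The genuine gap is that the core step is described rather than proved. The assertion that $f'f^{-1}=\tau f\tau^{-1}f^{-1}$ has ``support of size $\le \kappa$ of controllable cycle type'' and that ``a finite iteration of such conjugate-products deposits in $H$ an involution of every support size $\le\kappa$'' is precisely where the entire content of the theorem lives. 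To make it an argument you must actually compute $\tau f\tau^{-1}f^{-1}$ for a concrete $\tau$, check that it moves every point of $\Delta$ (so its support has size exactly $\kappa$, not merely at most $\kappa$ --- a lower bound is essential, since you need $H$ to contain elements of every infinite support size up to $\alpha$, not just some), and then show the resulting cycle type can be driven to an involution. As written, the normal closure of $f$ could a priori contain no element of small support at all. A secondary quibble: ``all involutions with the same number of transpositions are conjugate'' is true only when that number is less than $|\Q|$ (otherwise the fixed-point count is a second conjugacy invariant), which matters exactly in the $\kappa=|\Q|$ case you defer to the end.

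There is also a problem with what you are proving. Your own closing paragraph correctly observes that the finitary alternating group $A_{\mathrm{fin}}$ is a normal subgroup of $S(\Q)$ not of the form $\{f : |M_f|\le\alpha\}$, that the finitary symmetric group is likewise not of this form with a non-strict inequality, and that unattained limit suprema require a strict inequality. These are not ``boundary issues'' to be pinned down later; they are counterexamples to the literal statement, which must be restated (as in \cite{2}, with the exceptional subgroups listed and with $|M_f|<\kappa$ for infinite $\kappa$) before any proof can close. A proof that ends by noting its conclusion needs caveats has not yet established its conclusion.
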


\section{Study of $W_{\alpha, \beta} (\Q)$}
The goal of studying minimality for generating sets cannot be reached because of this almost trivial result:
\begin{theorem}
There is no minimal generating subset for $S(\Q)$.
\end{theorem}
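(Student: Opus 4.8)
The plan is to read \emph{minimal} as \emph{irredundant}: a generating set $X\subseteq S(\Q)$ for which no proper subset generates, equivalently $x\notin\langle X\setminus\{x\}\rangle$ for every $x\in X$. First I would record a cardinality remark that fixes the scale of the problem. Since $\langle X\rangle$ consists of the finite words in $X\cup X^{-1}$, one has $|\langle X\rangle|\leq\max(|X|,\aleph_0)$, so any generating set satisfies $|X|\geq|S(\Q)|=2^{\Q}$; in particular every generating set is infinite, and no generating set of size $\leq\Q$ exists. Thus the finite and the small cases are vacuous, and the entire content is to show that an infinite irredundant $X$ cannot exist.

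Next I would argue by contradiction. Assume $X$ is irredundant and well-order it as $X=\{x_\xi:\xi<\kappa\}$ with $\kappa=|X|$, an infinite cardinal and hence a limit ordinal. Put $G_\eta=\langle x_\xi:\xi<\eta\rangle$ for $\eta\leq\kappa$, and check the three defining features of this chain. It is \emph{strictly increasing}, because $x_\eta\notin\langle X\setminus\{x_\eta\}\rangle\supseteq G_\eta$ forces $x_\eta\notin G_\eta$; it is \emph{continuous}, since $G_\lambda=\bigcup_{\eta<\lambda}G_\eta$ at each limit $\lambda$; and every $G_\eta$ with $\eta<\kappa$ is a \emph{proper} subgroup, since $G_\eta\subseteq\langle X\setminus\{x_\eta\}\rangle\neq S(\Q)$ by irredundancy. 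Finally $\bigcup_{\eta<\kappa}G_\eta=\langle X\rangle=S(\Q)$ because $\kappa$ is a limit ordinal. This exhibits $S(\Q)$ as the union of a strictly increasing continuous chain of proper subgroups. Theorem 5 forbids precisely such a union once the chain is short enough, for otherwise the chain would have to reach $S(\Q)$ at some index and stay there; the desired contradiction follows as soon as the length of the chain is brought within the hypothesis of Theorem 5.

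The main obstacle is reconciling the \emph{length} of this chain with the hypothesis $|I|\leq\Q$ of Theorem 5. By the cardinality remark the chain has length $\kappa=2^{\Q}$, and by König's inequality $\mathrm{cf}(2^{\Q})>\Q$, so no cofinal subchain of length $\leq\Q$ is available and Theorem 5 cannot be applied verbatim. I expect this to be the crux, and I would resolve it in one of two ways. The first is to upgrade Theorem 5 to continuous chains of arbitrary length, that is, to show that the strong cofinality of $S(\Q)$ exceeds every length attainable by a strictly increasing continuous chain of proper subgroups; this is what genuinely rules out the configuration above. The second is to bypass chains entirely by exploiting that $S(\Q)$ is perfect, together with the Schreier--Ulam description of its normal subgroups recalled above: for a fixed generator one would study $\langle X\setminus\{x_0\}\rangle$ and argue that a proper subgroup containing every generator but one is incompatible with $S(\Q)=[S(\Q),S(\Q)]$, forcing $x_0$ back into it. Either route removes exactly one generator and contradicts irredundancy.
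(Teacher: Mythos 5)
Your setup is right---reading ``minimal'' as irredundant and aiming for a contradiction with the chain theorem (Theorem~1 of the paper, which you call Bergman's Theorem~5) is exactly the intended strategy---but you stop at the point where the proof actually happens, and neither of your two proposed escapes works. The chain $G_\eta=\langle x_\xi:\xi<\eta\rangle$ indexed by all of $X$ has length $2^{\Q}$, which, as you correctly observe, violates the hypothesis $|I|\leq\Q$. Your first repair, upgrading the chain theorem to chains of arbitrary length, is not available: it is consistent with ZFC that $S(\Q)$ \emph{is} the union of an increasing chain of proper subgroups of length strictly between $\Q$ and $2^{\Q}$ (the cofinality of the infinite symmetric group can consistently be smaller than $2^{\aleph_0}$), so no such strengthening can be proved. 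Your second repair founders because $\langle X\setminus\{x_0\}\rangle$ is not normal, so Schreier--Ulam says nothing about it, and perfectness of $S(\Q)$ is not contradicted by a proper subgroup that together with one extra element generates everything --- maximal subgroups of $S(\Q)$ exist, e.g.\ point stabilizers.

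The missing idea is that you do not need to enumerate all of $X$: it suffices to peel off a \emph{countable} subset. Pick a countable $A=\{a_1,a_2,\dots\}\subseteq X$ (possible since any generating set is infinite), set $S_0=X\setminus A$ and $S_n=S_{n-1}\cup\{a_n\}$. Each $\langle S_n\rangle$ is contained in $\langle X\setminus\{a_{n+1}\}\rangle$, which by irredundancy omits $a_{n+1}$ and is therefore proper; the chain $\langle S_0\rangle\subseteq\langle S_1\rangle\subseteq\cdots$ has length $\omega\leq|\Q|$; and its union is a subgroup containing all of $X$, hence equals $S(\Q)$. The chain theorem then forces some $\langle S_n\rangle=S(\Q)$, a contradiction. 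The point is that irredundancy keeps every ``all generators but one'' subgroup proper no matter how much of $X$ has already been thrown in, so the chain can be made as short as you like; this is exactly how the paper argues.
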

\begin{proof}
Suppose the contrary: there exists such a set $S$ which is at least countable. Then there exists a countable subset $A=\{a_1, a_2, a_3, \dots\}$, so we can take $S_0= S \setminus A, S_1=S_0\cup \{a_1\}$ and define $S_n := S_{n-1}\cup\{a_n\}$. \par 
Now we can notice that $\langle S_0\rangle \subseteq \langle S_1\rangle \subseteq \langle S_2\rangle \subseteq\dots\subseteq S(\Q)$, and any set from this chain is not equal to the group. But $\bigcup\limits_{i=0}^{\infty} S_i=S(\Q)$, which contradicts the previous Theorem. Hence there is no minimal generating subset for $S(\Q)$.
\end{proof}

Nonetheless, one can study almost arbitrary constructions, linked with the orbits of acting $S(\Q)$ on $\Q$. Let us describe the "largest" case $W_{\alpha, \beta} (\Q)$.

\begin{theorem}
For any cardinals $\alpha, \beta$, if $\alpha \cdot \beta < \Q$, then $W_{\alpha, \beta} (\Q)$ is not generating for $S(\Q)$.
\end{theorem}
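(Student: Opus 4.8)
The plan is to show that every $f \in W_{\alpha,\beta}(\Omega)$ moves only "few" points, so that $W_{\alpha,\beta}(\Omega)$ sits inside a proper normal subgroup, which cannot be generating. The key observation is the cardinality bound: if $f \in W_{\alpha,\beta}(\Omega)$, then by definition its set of non-stable elements $M_f$ decomposes as a disjoint union of at most $\alpha$ many $f$-eigenspaces, each of cardinality at most $\beta$. Hence
\begin{equation*}
|M_f| \leq \alpha \cdot \beta < \Omega.
\end{equation*}
So the hypothesis $\alpha \cdot \beta < \Omega$ forces every member of $W_{\alpha,\beta}(\Omega)$ to displace strictly fewer than $|\Omega|$ points.

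Next I would invoke the Schreier--Ulam theorem (stated above). Set $\gamma = \alpha \cdot \beta$; since $\gamma < \Omega$, the set
\begin{equation*}
S_\gamma(\Omega) = \{\, g \in S(\Omega) \mid |M_g| \leq \gamma \,\}
\end{equation*}
is a normal subgroup of $S(\Omega)$, and it is \emph{proper} because $\gamma < \Omega$ means there exist permutations (for instance, a bijection with no fixed points, whose support is all of $\Omega$) that move $|\Omega| > \gamma$ points and therefore lie outside $S_\gamma(\Omega)$. By the bound of the first paragraph, $W_{\alpha,\beta}(\Omega) \subseteq S_\gamma(\Omega)$.

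To finish, I would argue that a subset contained in a proper subgroup cannot generate the whole group: since $S_\gamma(\Omega)$ is a subgroup, $\langle W_{\alpha,\beta}(\Omega) \rangle \subseteq S_\gamma(\Omega) \subsetneq S(\Omega)$, so $W_{\alpha,\beta}(\Omega)$ is not generating. This is immediate and needs no normality; I mention Schreier--Ulam only because it gives a clean conceptual reason that $S_\gamma(\Omega)$ is a bona fide proper subgroup, but in fact one only needs that the permutations moving at most $\gamma$ points form a subgroup, which is elementary (the support of a product is contained in the union of supports, and $\gamma + \gamma = \gamma$ for infinite $\gamma$).

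I do not expect a serious obstacle here; the result is essentially a cardinality count. The only point deserving care is the estimate $|M_f| \leq \alpha \cdot \beta$: one must confirm that a disjoint union of at most $\alpha$ sets each of size at most $\beta$ has total cardinality at most $\alpha \cdot \beta$, which holds by the standard rules of cardinal arithmetic (a union of at most $\alpha$ sets of size at most $\beta$ injects into $\alpha \times \beta$). A minor edge case is when $\alpha$ or $\beta$ is finite, but the product $\alpha \cdot \beta$ and the strict inequality against the infinite cardinal $\Omega$ behave correctly in all cases, so the argument goes through uniformly.
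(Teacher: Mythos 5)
Your proof is correct and follows essentially the same route as the paper: bound $|M_f|\leq\alpha\cdot\beta<\Q$ and conclude that $W_{\alpha,\beta}(\Q)$ lies in the proper subgroup $S_{\alpha\cdot\beta}(\Q)$ (the paper takes $\alpha>\beta$ WLOG and writes this as $S_\alpha(\Q)$), hence cannot generate. The only pedantic caveat, which the paper also glosses over, is that for finite $\alpha\cdot\beta$ one should pass to the finitary permutations to get an honest proper subgroup.
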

\begin{proof}
Fix $\alpha, \beta$ --- arbitrary cardinals, smaller than $\Q$. Without loss of generality we can assume that $\alpha > \beta$. Fix the permutation $f\in W_{\alpha, \beta}(\Q)$. $M_f$ could be obtained as a union of less than $\alpha$ $f$-eigenspaces, such that the cardinality of each eigenspace is smaller than $\beta$. Also, the cardinality of $M_f$ is less than $\alpha\cdot\beta=\alpha$, hence $f\in S_{\alpha}(\Q)$. So $W_{\alpha, \beta}(\Q)\subseteq S_{\alpha}(\Q)$, which is not a generating set, because $\alpha<\Q$.
\end{proof}

\begin{lemma}
$W_{\Q, \alpha}(\Q)$ is a subset of $W_{\Q, \beta}(\Q)$ for any cardinals $\alpha < \beta < \Q$.
\end{lemma}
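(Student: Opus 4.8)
The plan is to observe that this is a pure monotonicity statement in the second parameter, so the very decomposition that certifies membership in $W_{\Q, \alpha}(\Q)$ will certify membership in $W_{\Q, \beta}(\Q)$ with no modification at all. I would take an arbitrary $f \in W_{\Q, \alpha}(\Q)$ and unfold the definition: the set $M_f$ of non-stable elements can be written as a disjoint union $M_f = \bigsqcup_{i \in J} U_i$, where $|J| \leq \Q$ and each $U_i$ is an $f$-eigenspace with $|U_i| \leq \alpha$.

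The key observation is that the hypothesis $\alpha < \beta$ immediately upgrades the per-block cardinality bound: for every $i \in J$ we have $|U_i| \leq \alpha < \beta$, hence $|U_i| \leq \beta$. The number of blocks is untouched, so it remains $\leq \Q$. Thus the same family $\{U_i\}_{i \in J}$ realizes $M_f$ as a disjoint union of at most $\Q$ $f$-eigenspaces, each of cardinality at most $\beta$, which is precisely the defining condition for $f \in W_{\Q, \beta}(\Q)$. Since $f$ was arbitrary, $W_{\Q, \alpha}(\Q) \subseteq W_{\Q, \beta}(\Q)$.

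I expect no genuine obstacle here: the only quantity that differs between the two sides is the cardinality bound on each block, and it is relaxed rather than tightened. The hypothesis $\beta < \Q$ plays no role in the containment itself and is presumably retained only to keep the statement inside the regime relevant to the later results; I would simply carry it along. If one wishes to be fully explicit, the single point worth recording is that ``$\leq \alpha$'' together with ``$\alpha < \beta$'' yields ``$\leq \beta$'', which is the entire content of the argument.
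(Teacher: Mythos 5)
Your proof is correct and is essentially the same as the paper's: both arguments reuse the same decomposition of $M_f$ and observe that each block's cardinality bound $\leq \alpha$ relaxes to $\leq \beta$ since $\alpha < \beta$. Your version is merely more explicit about the unchanged block count.
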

\begin{proof}
Fix $\alpha<\beta$ and $f\in W_{\Q, \alpha}(\Q)$. The cardinality of any $f$-eigenspace is not greater than $\alpha$, so it is not greater than $\beta$. Hense $f\in W_{\Q, \beta}(\Q)$.
\end{proof}

\begin{lemma}
$W_{\beta, \Q} (\Q) \subseteq W_{\alpha, \Q}(\Q)$ for any cardinals $\alpha < \beta$.
\end{lemma}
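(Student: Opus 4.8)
The plan is to reduce everything to one structural observation that makes the count index irrelevant once the size bound is the maximal value $\Q$: a union of $f$-eigenspaces is again an $f$-eigenspace. First I would record this. If $\{U_i\}_{i\in I}$ are $f$-eigenspaces and $x\in\bigcup_{i\in I}U_i$, then $x\in U_j$ for some $j$, so $f(x)\in U_j\subseteq\bigcup_{i\in I}U_i$; hence the union is an $f$-eigenspace. In particular $M_f$ itself is an $f$-eigenspace: if $x\in M_f$ but $f(x)$ were stable, then $f(f(x))=f(x)$ would force $f(x)=x$ by injectivity of $f$, contradicting $x\in M_f$; thus $f(M_f)\subseteq M_f$, and trivially $|M_f|\leq|\Q|=\Q$.

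Next I would take an arbitrary $f\in W_{\beta,\Q}(\Q)$ and write $M_f=\bigsqcup_{i\in I}U_i$ as a disjoint union of at most $\beta$ $f$-eigenspaces $U_i$, where the size condition $|U_i|\leq\Q$ holds automatically. To exhibit $f\in W_{\alpha,\Q}(\Q)$ I must re-express $M_f$ as a disjoint union of at most $\alpha$ $f$-eigenspaces. Since $\alpha\geq 1$, I fix any partition of the index set $I$ into at most $\alpha$ blocks $\{I_k\}_{k\in K}$ with $|K|\leq\alpha$, and set $V_k=\bigsqcup_{i\in I_k}U_i$. By the observation each $V_k$ is an $f$-eigenspace; the $V_k$ are pairwise disjoint because the $U_i$ are and the $I_k$ partition $I$; and $\bigsqcup_{k\in K}V_k=M_f$. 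Finally $|V_k|\leq|M_f|\leq\Q$, so the size constraint defining $W_{\alpha,\Q}(\Q)$ is met, and therefore $f\in W_{\alpha,\Q}(\Q)$.

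I do not expect a genuine obstacle; the work is entirely in the bookkeeping, and the two subtle points are worth flagging. The first is that the second index is exactly $\Q$: merging eigenspaces can only enlarge cardinalities, up to $|M_f|\leq\Q$, so it never breaks the bound, which is precisely why the statement is false in general if $\Q$ is replaced by some $\beta<\Q$. The second is that one needs $\alpha\geq 1$ so that at least one block is available (for $\alpha=0$ the set $W_{0,\Q}(\Q)$ is just the identity and the inclusion degenerates). Taking the coarsest possible partition, namely the single block $I$, already displays $M_f$ as one $f$-eigenspace and shows $f\in W_{1,\Q}(\Q)$; this makes transparent the stronger fact that every $W_{\gamma,\Q}(\Q)$ with $\gamma\geq 1$ equals $S(\Q)$, so the asserted inclusion is in fact an equality.
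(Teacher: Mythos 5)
Your proof is correct and rests on the same key observation as the paper's: a union of $f$-eigenspaces is again an $f$-eigenspace, so $M_f$ can be re-expressed using fewer (ultimately just one, since $1\leq\alpha$) eigenspaces, the size bound $\Q$ being harmless because merging can only grow cardinalities up to $|M_f|\leq\Q$. The paper collapses everything to the single eigenspace $V=\bigcup V_f$ directly, while you first run the more general partition-into-$\alpha$-blocks argument before noting the one-block case suffices; your added remarks on the necessity of $\alpha\geq 1$ and of the second index being exactly $\Q$ are accurate but not needed beyond what the paper already uses.
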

\begin{proof}
Fix $\alpha < \beta$ and $f\in W_{\beta, \Q}(\Q)$. Obviously, $M_f=\bigcup\limits_{\leq \beta} V_f$, where $V_f$ are $f$-eigenspaces, such that the cardinality of each eigenspace is not greater than $\Q$. Since $V = \bigcup\limits_{\leq \beta} V_f$ is an $f$-eigenspace itself,  $f\in W_{\alpha, \Q}(\Q)$, because $1 \leq \alpha$ and $|V| < \Q$.
\end{proof}

\begin{theorem}[Criterion for generating sets for $W_{\alpha,\beta} (\Q)$]
Set $W_{\alpha,\beta} (\Q)$ is generating for $S(\Q)$ iff at least one of the cardinals $\alpha, \beta$ is equal to $\Q$.
\end{theorem}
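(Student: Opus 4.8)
The plan is to prove the biconditional by splitting into the two implications, after first normalising the parameters. Since $M_f\subseteq\Q$, any partition of $M_f$ into $f$-eigenspaces has at most $|\Q|$ blocks, each of cardinality at most $|\Q|$; hence enlarging $\alpha$ or $\beta$ beyond $\Q$ changes nothing, and I may assume $\alpha,\beta\le\Q$. I also assume $\beta\ge 2$, since a one-point $f$-eigenspace inside $M_f$ would be a fixed point, which is impossible, so for $\beta\le 1$ the set $W_{\alpha,\beta}(\Q)=\{\mathrm{id}\}$ is degenerate. For the \emph{``only if''} direction I argue by contraposition: if $\alpha<\Q$ and $\beta<\Q$ then $\alpha\cdot\beta=\max(\alpha,\beta)<\Q$ (the product being finite, hence $<\Q$, when both are finite). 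The preceding Theorem then applies directly: $\alpha\cdot\beta<\Q$ forces $W_{\alpha,\beta}(\Q)\subseteq S_{\max(\alpha,\beta)}(\Q)$, a proper normal subgroup, so $W_{\alpha,\beta}(\Q)$ is not generating.

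For the \emph{``if'' direction, case $\beta=\Q$}, I would first observe that $M_f$ is \emph{always} an $f$-eigenspace: if $x$ is moved and $f(x)=y$ with $y$ fixed, then $f(x)=y=f(y)$ contradicts injectivity, so $f(M_f)\subseteq M_f$. As $|M_f|\le\Q$, every $f\in S(\Q)$ already lies in $W_{1,\Q}(\Q)$, and a single block is a special case of at most $\alpha$ blocks, so $W_{1,\Q}(\Q)\subseteq W_{\alpha,\Q}(\Q)$. Hence $W_{\alpha,\Q}(\Q)=S(\Q)$, which is trivially generating.

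For the \emph{``if'' direction, case $\alpha=\Q$ with $2\le\beta<\Q$}, I would use the first Lemma to reduce to $\beta=2$, since $W_{\Q,2}(\Q)\subseteq W_{\Q,\beta}(\Q)$. A block of size $2$ inside the support, being an $f$-eigenspace on which $f$ is injective with no fixed point, must be a transposition; therefore $W_{\Q,2}(\Q)$ is exactly the set of all involutions, i.e.\ $I_2$ together with $\mathrm{id}$, crucially including involutions of infinite support. It then suffices to show every $g\in S(\Q)$ is a product of two involutions. I would decompose $\Q$ into the orbits of $\langle g\rangle$; on a finite cycle $(a_0\cdots a_{k-1})$ set $\sigma\colon a_i\mapsto a_{-i}$ and $\tau\colon a_i\mapsto a_{1-i}$ (indices mod $k$), and on a $\Z$-orbit $\{a_n\}_{n\in\Z}$ with $g(a_n)=a_{n+1}$ use the same formulas with indices in $\Z$. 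In each case $\sigma,\tau$ are involutions and $\tau\sigma=g$ on that orbit. Assembling the orbitwise $\sigma$'s into one global involution and the $\tau$'s into another — legitimate because the orbits are disjoint and each local piece is an involution — yields $g=\tau\sigma$ with $\sigma,\tau\in W_{\Q,2}(\Q)$.

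The main obstacle is this last case. The naive attempt to generate by transpositions fails in the infinite setting, because a finite product of transpositions has finite support and so transpositions alone generate only the finitary subgroup; this is exactly why the hypothesis must permit $\alpha=\Q$ many blocks, as that is what makes involutions of infinite support available, and those are precisely what is needed to realise the bi-infinite $\Z$-orbits. Thus the only genuinely non-formal step is producing the two-involution factorisation uniformly across arbitrarily many (possibly $\Q$-many) orbits, and in particular handling the $\Z$-orbits correctly.
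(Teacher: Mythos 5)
Your proof is correct, and the ``if'' direction takes a genuinely different route from the paper. The paper funnels both cases through the single inclusion $I_2 = W_{\Q,2}(\Q)\subseteq W_{\alpha,\beta}(\Q)$ (via its Lemmas~1 and~2) and then invokes Theorem~8, whose proof that $\langle I_2\rangle=S(\Q)$ is non-constructive: it identifies $\langle I_2\rangle$ with its normal closure and appeals to the Schreier--Ulam classification of normal subgroups. You avoid Schreier--Ulam entirely. For $\beta=\Q$ your observation that $M_f$ is itself an $f$-eigenspace gives the sharper statement $W_{1,\Q}(\Q)=S(\Q)$, so the set is not merely generating but exhaustive (the paper's Lemma~2 implicitly contains this remark but does not draw the conclusion). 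For $\alpha=\Q$ you identify $W_{\Q,2}(\Q)$ with the involutions and give the classical explicit two-involution factorisation $g=\tau\sigma$ with $\sigma\colon a_i\mapsto a_{-i}$, $\tau\colon a_i\mapsto a_{1-i}$ on each orbit; this is constructive and quantitative (every permutation is a product of at most two generators), whereas the paper's argument only yields membership in the generated subgroup. The trade-off is that the paper's route is shorter given that it needs Theorem~8 anyway for the $S_{\alpha,\beta}$ and $I_n$ results, while yours is self-contained. Two further points in your favour: your normalisation $\alpha,\beta\le\Q$ and the exclusion of $\beta\le 1$ (where $W_{\Q,1}(\Q)=\{\mathrm{id}\}$ despite $\alpha=\Q$) flag genuine edge cases that the paper's statement silently ignores; and your contrapositive handling of the ``only if'' direction matches the paper's reduction to its Theorem~4.
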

\begin{proof}
In order to prove the first statement, let $W_{\alpha, \beta}(\Q)$ generate $S(\Q)$ and let us assume that $\alpha<\Q$ and $\beta<\Q$. So by theorem 4 $|S(\Q)| = |W_{\alpha, \beta} (\Q)| < |S(\Q)|$, which contradicts our assumption.\par 
On the other hand, let $\alpha=\Q$ or $\beta=\Q$. Then, following Lemmas 1 and 2, $I_2 = W_{\Q, 2} \subseteq W_{\alpha, \beta}$, and, as we show in Theorem 8, $I_2$ is a generating set. So
$$S(\Q) = \langle I_2 \rangle \subseteq \langle W_{\alpha, \beta}(\Q) \rangle \subseteq S(\Q),$$
and any of $W_{\alpha,\beta} (\Q)$ is generating.
\end{proof}

\section{Study of $S_{\alpha, \beta} (\Q)$}
Let us develop the case of the smallest of such sets, as described in chapter 1.

\begin{lemma}
$S_{\beta, \Q} (\Q) \subseteq S_{\alpha, \Q}(\Q)$ for any cardinals $\alpha < \beta$.
\end{lemma}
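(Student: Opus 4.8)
The plan is to take an arbitrary $f \in S_{\beta, \Q}(\Q)$ and repackage its eigenspace decomposition into exactly $\alpha$ pieces, each still of full cardinality $\Q$. By definition $M_f = \bigsqcup_{i \in B} V_i$, where $|B| = \beta$, each $V_i$ is an $f$-eigenspace, and $|V_i| = \Q$ for every $i$. I would first dispose of two degenerate ranges: assume $1 \leq \alpha$ (for $\alpha = 0$ the set $S_{0,\Q}(\Q)$ consists of the identity alone, so the statement is degenerate), and $\beta \leq \Q$ (otherwise $|M_f| = \beta \cdot \Q = \beta > \Q$ is impossible, so $S_{\beta,\Q}(\Q) = \varnothing$ and the inclusion holds trivially). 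In contrast with Lemma 2, where "at most $\alpha$" let us collapse everything into one eigenspace, here we must produce \emph{exactly} $\alpha$ of them, so instead of collapsing we regroup.

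The key step is to fix a partition of the index set $B$ into exactly $\alpha$ nonempty blocks $\{P_j\}_{j \in A}$, $|A| = \alpha$. This is available precisely because $1 \leq \alpha \leq \beta$: a surjection $B \twoheadrightarrow A$ onto a set of cardinality $\alpha$ exists under this hypothesis, and its fibres furnish the blocks. I then set
$$W_j = \bigcup_{i \in P_j} V_i \qquad (j \in A).$$
Each $W_j$ is an $f$-eigenspace, since a union of $f$-eigenspaces is again one --- the very observation used in the proof of Lemma 2. As the $P_j$ partition $B$ and the $V_i$ are pairwise disjoint, the $W_j$ are pairwise disjoint with $\bigsqcup_{j \in A} W_j = M_f$.

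Finally I would verify cardinalities: $|W_j| = |P_j| \cdot \Q$, and since $P_j \neq \varnothing$ while $|P_j| \leq \beta \leq \Q$, infiniteness of $\Q$ gives $|W_j| = \max(|P_j|, \Q) = \Q$. Thus $M_f$ is exhibited as a disjoint union of exactly $\alpha$ $f$-eigenspaces, each of cardinality $\Q$, i.e.\ $f \in S_{\alpha, \Q}(\Q)$. The main obstacle --- really the only delicate point --- is reconciling "exactly $\alpha$" with "full cardinality $\Q$" simultaneously: the blocks must be nonempty so that each $|W_j|$ stays equal to $\Q$, and there must be exactly $\alpha$ of them, which is where $\alpha \leq \beta$ enters. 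Both are secured at once by taking the fibres of a surjection $B \twoheadrightarrow A$, so no genuine difficulty remains.
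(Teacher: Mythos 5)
Your proposal is correct and follows essentially the same route as the paper: both regroup the given $\beta$ eigenspaces of cardinality $\Q$ into exactly $\alpha$ disjoint unions, using that a union of $f$-eigenspaces is an $f$-eigenspace and that $|P_j|\cdot\Q=\Q$ for $1\leq|P_j|\leq\Q$. Your version via the fibres of a surjection $B\twoheadrightarrow A$ is in fact a tidier execution of the paper's splitting $\beta=\alpha+\sum_\gamma\alpha$, and your explicit handling of the degenerate ranges $\alpha=0$ and $\beta>\Q$ is a small improvement.
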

\begin{proof}
Let $\alpha < \beta$ be cardinals and fix $f\in S_{\beta, \Q}(\Q)$. It is easy to see that $M_f=\bigcup\limits_{\beta} V_f$, where $V_f$ are $f$-eigenspaces with the cardinality equal to $\Q$. Because  $\alpha<\beta$, there exists $\gamma < \Q$, such that $\beta=\alpha+\sum\limits_{\gamma} \alpha$. So $M_f$ could be obtained as $(\bigcup\limits_{\alpha} V_f) \cup (\bigcup\limits_{\sum\limits_{\gamma}\alpha} V_f)$, i. e. as a union of $\alpha$ $f-$eigenspaces, and since 
$| \bigcup\limits_{\sum\limits_{\gamma}\alpha} V_f | = \sum \limits_{\gamma} \alpha \Q = \Q, $ it follows that $f\in S_{\alpha, \Q}(\Q)$.
\end{proof}

\begin{lemma}
$S_{\Q, \alpha}(\Q)\subseteq S_{\Q, \beta}(\Q)$ for any infinite cardinals $\alpha < \beta$.
\end{lemma}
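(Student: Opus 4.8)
The plan is to prove the inclusion by a regrouping argument, dual to the one used in Lemma 3: there the count of eigenspaces was merged while their cardinality was kept at $\Q$; here I keep the count at $\Q$ and instead enlarge the common cardinality from $\alpha$ up to $\beta$. The whole argument rests on two elementary facts. First, an arbitrary union of $f$-eigenspaces is again an $f$-eigenspace: if $f(x)\in V_i$ whenever $x\in V_i$, then $f(x)\in\bigcup_i V_i$ whenever $x\in\bigcup_i V_i$. Second, the cardinal identities $\alpha\cdot\beta=\beta$ and $\Q\cdot\beta=\Q$, which hold because $\alpha,\beta$ are infinite with $\alpha<\beta\leq\Q$ (recall $\beta\leq\Q$ is forced, since an $f$-eigenspace is a subset of $\Q$; otherwise $S_{\Q,\beta}(\Q)=\varnothing$).

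First I would fix $f\in S_{\Q,\alpha}(\Q)$ and write $M_f=\bigsqcup_{i\in\Q}V_i$, a disjoint union of $\Q$ many $f$-eigenspaces with $|V_i|=\alpha$ for every $i$. The goal is to re-partition the index set $\Q$ into exactly $\Q$ blocks, each of size $\beta$, and then take the union of the $V_i$ over each block. To produce such a partition I would fix a bijection $\Q\cong\Q\times\beta$, which exists precisely because $\Q\cdot\beta=\Q$, and set $B_j:=\{j\}\times\beta$ for $j\in\Q$; this yields exactly $\Q$ blocks, each of cardinality $\beta$.

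Then, putting $W_j:=\bigcup_{i\in B_j}V_i$, each $W_j$ is an $f$-eigenspace by the first observation, and $|W_j|=\beta\cdot\alpha=\beta$. Since the $B_j$ partition the index set and the $V_i$ were pairwise disjoint, we obtain $M_f=\bigsqcup_{j\in\Q}W_j$, a disjoint union of exactly $\Q$ many $f$-eigenspaces, each of cardinality exactly $\beta$. Hence $f\in S_{\Q,\beta}(\Q)$, which is the desired inclusion.

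I do not expect a genuine obstacle here; the single point demanding care is that the reassembled pieces $W_j$ must have cardinality \emph{exactly} $\beta$ (not merely at most $\beta$) and that their number must be \emph{exactly} $\Q$ (not merely at most $\Q$), so that the conclusion lands in the strict set $S_{\Q,\beta}$ rather than in a $W$- or $K$-type set. Both requirements are delivered by the two identities $\alpha\cdot\beta=\beta$ and $\Q\cdot\beta=\Q$, so the proof reduces to invoking them after the regrouping is set up.
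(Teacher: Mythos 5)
Your proposal is correct and follows essentially the same route as the paper: both regroup the $\Q$ many $\alpha$-sized eigenspaces into $\Q$ blocks of $\beta$ eigenspaces each (using $\Q=\Q\cdot\beta$), and observe that each block's union is an $f$-eigenspace of cardinality $\beta\cdot\alpha=\beta$. Your write-up is merely more explicit about the bijection $\Q\cong\Q\times\beta$, the disjointness, and the exactness of the cardinalities, which the paper leaves implicit.
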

\begin{proof}
Fix $\alpha<\beta$ and $f\in S_{\Q, \alpha}(\Q)$. Obviously $M_f=\bigcup\limits_{\Q} V_f$, where $V_f$ are $f$-eigenspaces, such that the cardinality of each eigenspace is equal to $\alpha$. Observe that $\Q$ could be obtained as $\alpha \cdot \beta \cdot \Q$, so $M_f=\bigcup\limits_{\Q}(\bigcup\limits_{\beta} V_f)$, where the cardinality of any $V_f$ is equal to $\alpha$ and $\bigcup\limits_{\beta} V_f$ is an $f$-eigenspace, whose cardinality is equal to $\beta$. Hence, $f\in S_{\Q, \beta}(\Q)$ by definition.
\end{proof}

\begin{theorem}
$\langle I_n \rangle \subseteq \langle S_{\Q, n}(\Q) \rangle$ for any $n \geq 2$.
\end{theorem}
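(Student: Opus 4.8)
My plan is to prove the stronger set inclusion $I_n \subseteq \langle S_{\Q, n}(\Q)\rangle$; since the right-hand side is a subgroup containing $I_n$, this immediately yields $\langle I_n\rangle \subseteq \langle S_{\Q,n}(\Q)\rangle$. So I would fix $g \in I_n$, i.e.\ an element of order $n$. Because the order of a permutation is the least common multiple of its cycle lengths, every cycle of $g$ has length dividing $n$. Collecting, for each divisor $d \mid n$ with $d \geq 2$, the product $g_d$ of all $d$-cycles of $g$, I obtain finitely many permutations with pairwise disjoint supports whose product is $g$; they commute, so it suffices to show each $g_d \in \langle S_{\Q,n}(\Q)\rangle$.

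This reduces the problem to a single permutation $h$ that is a disjoint union of $\kappa$ cycles, all of the same length $d$, where $d \mid n$, $d \geq 2$, and $\kappa = \kappa_d \leq \Q$ is a cardinal. The key observation is that $n/d$ disjoint $d$-cycles form an $h$-invariant set of cardinality exactly $n$, i.e.\ a legal block for membership in $S_{\Q,n}(\Q)$. I would then split into two cases by the number of cycles. If $\kappa = \Q$, I partition the $\Q$ cycles of $h$ into $\Q$ groups of $n/d$ cycles each; every group is an $h$-invariant block of cardinality $n$, and there are exactly $\Q$ of them, so $h \in S_{\Q,n}(\Q)$ directly.

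If $\kappa < \Q$, then $|M_h| = \kappa \cdot d < \Q$, so $h$ fixes a set $E$ with $|E| = \Q$. On $E$ I choose an auxiliary permutation $p$ that is a disjoint union of $\Q$ many $d$-cycles grouped into $\Q$ blocks of size $n$; thus $p \in S_{\Q,n}(\Q)$, and since $p^{-1}$ has the same support and the same invariant blocks, $p^{-1} \in S_{\Q,n}(\Q)$ as well. As $h$ and $p$ have disjoint supports, $hp$ is a disjoint union of $\kappa + \Q = \Q$ cycles, all of length $d$; partitioning these $\Q$ cycles into $\Q$ groups of $n/d$ cycles (mixing $h$-cycles and $p$-cycles freely) exhibits $hp \in S_{\Q,n}(\Q)$. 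Then $h = (hp)\,p^{-1}$ writes $h$ as a product of two elements of $S_{\Q,n}(\Q)$, so $h \in \langle S_{\Q,n}(\Q)\rangle$. Combining the two cases and taking the finite commuting product over $d$ gives $g \in \langle S_{\Q,n}(\Q)\rangle$, as required.

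The step I expect to be the main obstacle — and the reason a single element of $I_n$ need not itself lie in $S_{\Q,n}(\Q)$ — is the block-packing constraint: the support must split into invariant pieces of cardinality \emph{exactly} $n$. A permutation with a lone $3$-cycle and many $2$-cycles (for $n = 6$) cannot be so partitioned. Decomposing $g$ by cycle length isolates this difficulty to one length at a time, and the ``pad with $\Q$ auxiliary $d$-cycles, then cancel by $p^{-1}$'' device removes the divisibility obstruction, since $\Q + \kappa = \Q$ is always partitionable into blocks of the fixed finite size $n/d$. The only bookkeeping I would check carefully is that inverses and products of $S_{\Q,n}(\Q)$-elements retain exactly $\Q$ blocks (never fewer), which is precisely where the clause ``equal to $\beta$'' in the definition of $S_{\Q,n}(\Q)$ must be respected.
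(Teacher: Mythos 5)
Your proof is correct, but it is organized quite differently from the paper's. The paper splits on whether $|M_f|=\Q$ or $|M_f|<\Q$: in the first case it asserts that $f$ itself already lies in $S_{\Q,n}(\Q)$ ``by definition'', and in the second case it picks a single extension $g_1\in S_{\Q,n}(\Q)$ with $g_1|_{M_f}=f$ and cancels it outside $M_f$ by a second element $g_2$ --- essentially your ``pad then multiply by $p^{-1}$'' device applied once, globally. You instead first factor $g$ into commuting pieces $g_d$ by cycle length and treat each length separately, using direct block-packing when there are $\Q$ many $d$-cycles and the padding trick otherwise. The payoff of your finer decomposition is real: the block-packing obstruction you flag (a lone $3$-cycle among $\Q$ many $2$-cycles for $n=6$) shows that the paper's first case is \emph{not} obvious as stated --- such an $f\in I_6$ has $|M_f|=\Q$ yet does not lie in $S_{\Q,6}(\Q)$, since no $f$-invariant set of cardinality exactly $6$ can absorb the single $3$-cycle. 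Your per-length argument handles this situation (and also makes the existence of the auxiliary element, which the paper leaves implicit in its second case, completely explicit), so your write-up is both correct and more complete than the one in the paper; the paper's version, when it works, is shorter because it needs only two factors rather than two per divisor of $n$.
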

\begin{proof}
It is sufficient to prove that any permutation of order $n$ could be obtained as a finite composition of permutations from $S_{\Q, n}(\Q)$. \par 
Fix $n\in\mathbb{N}$ and $f\in I_n$. Consider two cases: when $|M_f|=\Q$ and when $|M_f|<\Q$.\par 
The first case is obvious by definition (because $M_f$ could be obtained as $\Q$ $f-$eigenspaces, such that the cardinality of each eigenspace is equal to $n$).\par
In the second case we can take such $g_1\in S_{\Q, n}$, that  $g_{1_{|_{M_f}}}=f$. Observe that $|M_{g_1}\setminus M_f|=\Q$, because $|M_f|<\Q$, so $M_{g_1}\setminus M_f$ could be obtained as a disjoint union of $f$-eigenspaces, such that the cardinality of each eigenspace is equal to $n$. One can construct a function $g_2$, such that $g_{2_{|_{M_{g_1}}\setminus M_f}}=g_1^{-1}$ and $g_{2_{|_{M_f}}}= id_{\Q}$. It is obvious that $g_2$ also lies in $S_{\Q, n}(\Q)$, so the composition of $g_1$ and $g_2$ equals to $f$ and $f\in \langle S_{\Q, n} (\Q) \rangle$.
\end{proof}

\begin{lemma}
$S_{\Q, 2}(\Q) \subseteq S_{\Q, \alpha}(\Q)$ for any infinite cardinal $\alpha$.
\end{lemma}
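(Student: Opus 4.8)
The plan is to reuse the regrouping idea of Lemma~4, only now assembling the finite (size-$2$) eigenspaces of an involution into single eigenspaces of infinite size $\alpha$. Throughout I assume $\alpha \leq \Q$; otherwise no subset of $\Q$ has cardinality $\alpha$, so $S_{\Q, \alpha}(\Q)$ contains no permutation with nonempty support and the inclusion is vacuous. So fix an infinite cardinal $\alpha \leq \Q$ and a permutation $f \in S_{\Q, 2}(\Q)$. By definition $M_f = \bigcup\limits_{\Q} V_f$, a disjoint union of $\Q$ many $f$-eigenspaces, each of cardinality $2$ (that is, $f$ is a product of $\Q$ disjoint transpositions).

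First I would partition the index set of these eigenspaces, which has cardinality $\Q$, into $\Q$ blocks each of cardinality $\alpha$. This is possible precisely because $\alpha \leq \Q$ forces $\alpha \cdot \Q = \Q$, so $\Q$ can be written as $\sum\limits_{\Q} \alpha$. Next, for each block I would take the union $U$ of the $\alpha$ transpositions it indexes. A union of $f$-eigenspaces is again an $f$-eigenspace, and $|U| = 2 \cdot \alpha = \alpha$ since $\alpha$ is infinite. Thus $M_f$ is realised as a disjoint union of $\Q$ many $f$-eigenspaces, each of cardinality exactly $\alpha$, which is exactly the defining condition for $f \in S_{\Q, \alpha}(\Q)$.

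The argument is almost entirely cardinal bookkeeping, so the only real point to watch is that the regrouping yields exactly $\Q$ blocks of cardinality exactly $\alpha$ (not merely $\leq \alpha$): this rests on the two identities $2 \cdot \alpha = \alpha$ and $\alpha \cdot \Q = \Q$, the first guaranteeing each assembled eigenspace has full size $\alpha$, the second guaranteeing that $\Q$ of them remain. The mild subtlety---hardly an obstacle---is the boundary case $\alpha > \Q$ noted above; once $\alpha \leq \Q$ is in force, everything proceeds exactly as in Lemma~4, with the finite blocks of size $2$ playing the role that the $\alpha$-sized blocks played there.
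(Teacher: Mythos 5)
Your proposal is correct and follows essentially the same route as the paper: both proofs regroup the $\Q$ many $2$-element eigenspaces into $\Q$ blocks of $\alpha$ eigenspaces each, using $\alpha\cdot\Q=\Q$ and $2\cdot\alpha=\alpha$ to see that each block is an $f$-eigenspace of cardinality exactly $\alpha$. Your explicit handling of the cardinal identities and of the degenerate case $\alpha>\Q$ is in fact slightly more careful than the paper's version, which tacitly assumes $2<\alpha<\Q$.
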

\begin{proof}
Let $f\in S_{\Q, 2}(\Q)$, so $M_f=\bigcup\limits_{\Q} V_f$, where $V_f$ are $f$-eigenspaces, such that the cardinality of each eigenspace is equal to $2$. $\Q$ could be obtained as $\Q \cdot (\alpha \cdot 2)$, because $2< \alpha< \Q$. Then $M_f=\bigcup\limits_{\Q}(\bigcup\limits_{\alpha} V_f)$, where the cardinality of any $V_f$ is equal to $2$ and $\bigcup\limits_{\alpha} V_f$ is an $f$-eigenspace, such that the cardinality of each eigenspace is equal to $\alpha$. Hence, $f\in S_{\Q, \alpha}(\Q)$.
\end{proof}

\begin{theorem}[Criterion for generating sets for $S_{\alpha,\beta} (\Q)$]
Set $S_{\alpha,\beta} (\Q)$ is generating for $S(\Q)$ iff at least one of the cardinals $\alpha, \beta$ is equal to $\Q$.
\end{theorem}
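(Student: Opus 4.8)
The plan is to prove the two implications of the criterion separately. The ``only if'' direction (necessity) I would settle by a containment into a proper normal subgroup, exactly as in Theorem 5; the ``if'' direction (sufficiency) I would obtain from a single normal-closure argument resting on the Schreier--Ulam theorem (Theorem 2), which I find cleaner than chaining the lemmas and which handles all cases uniformly.

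For necessity I argue the contrapositive. Suppose $\alpha<\Q$ and $\beta<\Q$. Directly from the definitions, demanding exactly $\alpha$ eigenspaces of cardinality exactly $\beta$ is a special case of allowing at most $\alpha$ eigenspaces of cardinality at most $\beta$, so $S_{\alpha,\beta}(\Q)\subseteq W_{\alpha,\beta}(\Q)$. Since $\alpha,\beta<\Q$ force $\alpha\cdot\beta<\Q$, Theorem 4 applies and places $W_{\alpha,\beta}(\Q)$ inside the proper normal subgroup $S_{\alpha\cdot\beta}(\Q)\subsetneq S(\Q)$. Hence $\langle S_{\alpha,\beta}(\Q)\rangle\subseteq S_{\alpha\cdot\beta}(\Q)\neq S(\Q)$, so $S_{\alpha,\beta}(\Q)$ is not generating. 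This is the necessity half of Theorem 5 transported along the inclusion $S_{\alpha,\beta}\subseteq W_{\alpha,\beta}$.

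For sufficiency assume $\alpha=\Q$ or $\beta=\Q$. The key structural observation is that $S_{\alpha,\beta}(\Q)$ is closed under conjugation: if $f\in S_{\alpha,\beta}(\Q)$ realises a decomposition $M_f=\bigsqcup_{i} U_i$ and $g\in S(\Q)$, then $M_{gfg^{-1}}=g(M_f)=\bigsqcup_i g(U_i)$, each $g(U_i)$ is a $gfg^{-1}$-eigenspace of the same cardinality $\beta$, and there are again $\alpha$ of them, so $gfg^{-1}\in S_{\alpha,\beta}(\Q)$. Consequently $N:=\langle S_{\alpha,\beta}(\Q)\rangle$ is a normal subgroup of $S(\Q)$. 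Next I exhibit a witness $f\in S_{\alpha,\beta}(\Q)$ with $|M_f|=\Q$: because $\alpha=\Q$ or $\beta=\Q$ and $\beta\geq 2$, we have $\alpha\cdot\beta=\Q$, so one can partition $\Q$ into exactly $\alpha$ blocks of cardinality $\beta$ and put a fixed-point-free permutation on each block (a $\beta$-cycle when $\beta$ is finite, a disjoint union of transpositions or bi-infinite cycles when $\beta$ is infinite); every block is then an $f$-eigenspace of size $\beta$ and $M_f=\Q$. Finally, by Theorem 2 we have $N=S_{\gamma}(\Q)$ for some cardinal $\gamma$, and since $N$ contains $f$ with $|M_f|=\Q$ we must have $\gamma=\Q$, whence $N=S_{\Q}(\Q)=S(\Q)$. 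This proves the claim in all cases at once.

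I expect the genuinely delicate point to be the finite-$\beta$ sub-case of sufficiency, which is precisely where the inclusion-based route suggested by the preceding lemmas breaks down: Lemma 5 yields $S_{\Q,2}(\Q)\subseteq S_{\Q,\beta}(\Q)$ only for infinite $\beta$, and a two-element eigenspace is never a $\beta$-element one, so $S_{\Q,2}(\Q)\not\subseteq S_{\Q,\beta}(\Q)$ for finite $\beta>2$. If one prefers to stay within the paper's scaffolding, this gap is closed through Theorem 6: the set $I_{\beta}$ is conjugation-invariant (conjugation preserves order) and contains permutations consisting of $\Q$ disjoint $\beta$-cycles, hence moving $\Q$ points, so $\langle I_{\beta}\rangle$ is a normal subgroup lying in no $S_{\gamma}(\Q)$ with $\gamma<\Q$ and therefore equals $S(\Q)$ by Theorem 2; then Theorem 6 gives $S(\Q)=\langle I_{\beta}\rangle\subseteq\langle S_{\Q,\beta}(\Q)\rangle$. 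For infinite $\beta$ one instead reduces to the generation by $S_{\Q,2}(\Q)$ via Lemma 5, and the case $\beta=\Q$ with arbitrary $\alpha$ via Lemma 5 (taking $\alpha=\Q$) followed by Lemma 3. Either way the essential mechanism is identical to the uniform argument above: a conjugation-invariant family containing an element of full support $\Q$ must generate $S(\Q)$.
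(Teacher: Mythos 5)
Your necessity argument coincides with the paper's: both route $S_{\alpha,\beta}(\Q)$ through $W_{\alpha,\beta}(\Q)$ and invoke Theorem 4. Your sufficiency argument, however, is genuinely different and is correct. The paper chains inclusions --- Lemma 5 and Theorem 6 give $\langle I_2\rangle\subseteq\langle S_{\Q,2}(\Q)\rangle\subseteq\langle S_{\Q,\beta}(\Q)\rangle$ for infinite $\beta$, Lemma 3 handles $\beta=\Q$ with arbitrary $\alpha$, and the finite-$\beta$ case is patched separately via $\langle I_\beta\rangle\subseteq\langle S_{\Q,\beta}(\Q)\rangle$ --- and then leans on Theorem 8 for the generation by $I_2$ and $I_\beta$. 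You instead observe that $S_{\alpha,\beta}(\Q)$ is conjugation-invariant, so the subgroup it generates is normal, and then apply Schreier--Ulam together with a witness of full support $|M_f|=\Q$; this is exactly the mechanism of the paper's proof of Theorem~8, lifted one level so that the intermediate Lemmas 3--5 and Theorem 6 become unnecessary, and it treats finite and infinite $\beta$ uniformly. What the paper's route buys is explicit factorizations (every involution is exhibited as a product of two elements of $S_{\Q,n}(\Q)$); what yours buys is brevity and the elimination of the case split, and you correctly diagnose why the split is unavoidable on the paper's route (a two-element eigenspace is never a $\beta$-element one for finite $\beta>2$, so Lemma 5 cannot reach that case). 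Two small caveats, neither fatal: your argument, like the paper's Theorem 8, relies on the stated form of Schreier--Ulam, which omits the finitary alternating subgroup from the classification, so strictly one should also note that a normal subgroup containing a permutation of full support cannot be finitary; and your witness construction needs $\beta\geq 2$, which you flag --- the degenerate case $\beta=1$ (where $S_{\Q,1}(\Q)$ is at most the identity) is a genuine exception to the theorem as literally stated that the paper silently ignores.
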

\begin{proof}
The first item follows from Theorem 4, because if $a \cdot \beta < \Q$ then $S_{\alpha, \beta} (\Q) $ can't be a generating set.\par 

Following Lemma 5 and Theorem 6, $\langle I_2 \rangle \subseteq \langle S_{\Q, 2}(\Q) \rangle \subseteq \langle S_{\Q, \beta}(\Q) \rangle$ for any infinite cardinal $\beta$, and, as we show in Theorem 8, $I_2$ is a generating set. Then, for any infinite $\beta$ $S_{\Q, \beta} (\Q)$ is a generating set. So, by Lemma 4, $S_{\alpha, \Q}(\Q)$ will be generating for any cardinal $\alpha$, because $S_{\Q, \Q}(\Q) \subseteq S_{\alpha, \Q}(\Q)$. \par

Finally, if $\beta$ is a natural number, by Theorem 6, $\langle I_{\beta} \rangle \subseteq \langle S_{\Q, \beta}(\Q) \rangle$ and, as we show in Theorem 8, $I_{\beta}$ is a generating set.
\end{proof}

\section{The main result}

\begin{theorem}
$\langle I_n \rangle = S(\Q)$ for any $n\in\mathbb{N}$.
\end{theorem}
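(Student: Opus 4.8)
The plan is to exploit the fact that $I_n$ is closed under conjugation, so that $\langle I_n \rangle$ is a \emph{normal} subgroup of $S(\Q)$, and then to pin down exactly which normal subgroup it is by invoking the Schreier--Ulam Theorem (Theorem 2). First I would observe that for any $f, g \in S(\Q)$ the conjugate $g f g^{-1}$ has the same cycle type as $f$, and hence the same order; therefore $g\, I_n\, g^{-1} = I_n$ for every $g$. Consequently the subgroup $N := \langle I_n \rangle$ generated by this conjugation-invariant set satisfies $g N g^{-1} = \langle g I_n g^{-1}\rangle = N$, i.e. $N \trianglelefteq S(\Q)$. By Theorem 2 there is then a cardinal $\alpha \leq \Q$ with $N = S_\alpha(\Q) = \{ f \in S(\Q) \mid |M_f| \leq \alpha \}$, and the whole problem is reduced to determining $\alpha$.

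The next step is to show $\alpha = \Q$, and for this it suffices to exhibit a single element of $I_n$ whose moved set $M_f$ has cardinality $\Q$. Here I assume $n \geq 2$ (the value $n=1$ is vacuous, since $I_1 = \{\mathrm{id}\}$). Because $n \cdot \Q = \Q$ for the infinite cardinal $\Q$, I would partition $\Q$ into $\Q$ pairwise disjoint blocks, each of size $n$, and let $h$ act as an $n$-cycle on each block. Every orbit of $h$ then has length $n$, so $h$ has order exactly $n$, that is $h \in I_n$; and since $h$ fixes no point, $M_h = \Q$ and $|M_h| = \Q$.

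Finally, from $h \in I_n \subseteq N = S_\alpha(\Q)$ I obtain $\Q = |M_h| \leq \alpha$, and combined with $\alpha \leq \Q$ this forces $\alpha = \Q$. Hence $N = S_\Q(\Q) = \{ f \in S(\Q) \mid |M_f| \leq \Q \} = S(\Q)$, since every permutation of $\Q$ moves at most $|\Q| = \Q$ points. This yields $\langle I_n \rangle = S(\Q)$, completing the argument.

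The only genuinely delicate point I anticipate is the invocation of Theorem 2: one must make sure the Schreier--Ulam classification is applied in precisely the form stated, so that conjugation-invariance of $I_n$ really does identify $\langle I_n \rangle$ with one of the subgroups $S_\alpha(\Q)$. Everything else --- normality, the cycle-type argument, and the block construction of a full-support element of order $n$ --- is routine. (Should one wish to avoid Theorem 2, the alternative would be the direct route: first show that every permutation is a product of two involutions, and then show each involution lies in $\langle I_n \rangle$; but this constructive path is substantially longer, and the normal-subgroup argument is the natural one given the tools already assembled.)
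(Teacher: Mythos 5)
Your proposal is correct and follows essentially the same route as the paper: observe that $I_n$ is conjugation-invariant so $\langle I_n\rangle$ is normal, apply the Schreier--Ulam classification, and rule out $\alpha<\Q$ by exhibiting an order-$n$ permutation with $|M_f|=\Q$. The only difference is that you spell out the block construction of that full-support element (and correctly flag the degenerate case $n=1$), which the paper leaves implicit.
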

\begin{proof}
It is easy to see that the normal closure of $I_n$ coincides with the subgroup, generated by $I_n$:
$$\bar{I}_n = \langle I_n^{S(\Q)} \rangle = \langle \{gsg^{-1} \mid s \in I_n\} \rangle = \langle I_n \rangle.$$
Assume $\langle I_n \rangle < S(\Q)$. So, by Schrier-Ulam Theorem, there exists such a cardinal $\alpha <\Q$, that $\langle I_n\rangle = S_{\alpha}(\Q)$, but it contradicts the fact that there exists such a permutation $f\in I_n$, that $M_f=\Q$, which is impossible by definition.
\end{proof}

\begin{theorem} The following statements hold:
\begin{enumerate} 
    \item $W_{\alpha,\beta} (\Q), R_{\alpha,\beta} (\Q), K_{\alpha,\beta} (\Q), S_{\alpha,\beta} (\Q)$ are not generating sets for any $\alpha\cdot \beta < \Q$.
    \item  $W_{\alpha,\Q} (\Q), R_{\alpha,\Q} (\Q), K_{\alpha,\Q} (\Q), S_{\alpha,\Q} (\Q)$ are generating sets for any $\alpha \leq \Q$.
    \item $W_{\Q,\beta} (\Q), R_{\Q,\beta} (\Q), K_{\Q,\beta} (\Q), S_{\Q,\beta} (\Q)$ are generating sets for any $\beta \leq \Q$.
\end{enumerate}
\end{theorem}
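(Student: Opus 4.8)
The plan is to deduce all three statements from the inclusion structure of the four families, combined with the criteria already obtained for $W$ and $S$. First I would record the purely definitional inclusions
$$S_{\alpha,\beta}(\Q)\subseteq K_{\alpha,\beta}(\Q)\subseteq W_{\alpha,\beta}(\Q),\qquad S_{\alpha,\beta}(\Q)\subseteq R_{\alpha,\beta}(\Q)\subseteq W_{\alpha,\beta}(\Q).$$
These hold because every decomposition of $M_f$ witnessing membership in $S_{\alpha,\beta}$---a disjoint union of exactly $\alpha$ eigenspaces each of cardinality exactly $\beta$---is automatically a valid witness for $K$ (relaxing ``$=\beta$'' to ``$\leq\beta$''), for $R$ (relaxing ``exactly $\alpha$'' to ``at most $\alpha$''), and for $W$ (relaxing both). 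Thus $S$ is the smallest and $W$ the largest of the four for matching subscripts, and the whole argument rests on the principle that non-generation propagates downward from $W$ while generation propagates upward from $S$.

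For statement (1), fix cardinals with $\alpha\cdot\beta<\Q$. By Theorem 4 we have $\langle W_{\alpha,\beta}(\Q)\rangle\subsetneq S(\Q)$; indeed its proof places $W_{\alpha,\beta}(\Q)$ inside a proper normal subgroup $S_{\gamma}(\Q)$ with $\gamma=\alpha\cdot\beta<\Q$, furnished by Schreier--Ulam. Since $S_{\alpha,\beta}(\Q)$, $K_{\alpha,\beta}(\Q)$ and $R_{\alpha,\beta}(\Q)$ are all contained in $W_{\alpha,\beta}(\Q)$, each of them generates a subgroup of $\langle W_{\alpha,\beta}(\Q)\rangle$, so none of the four families is generating.

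For statements (2) and (3) it is enough to handle the smallest family $S$, since generation of $S_{\alpha,\beta}(\Q)$ pulls the larger three up with it. By Theorem 7 the set $S_{\alpha,\Q}(\Q)$ is generating for every $\alpha\leq\Q$ (the case $\beta=\Q$), and $S_{\Q,\beta}(\Q)$ is generating for every $\beta\leq\Q$ (the case $\alpha=\Q$). Then for each $X\in\{K,R,W\}$ the inclusions above give
$$S(\Q)=\langle S_{\alpha,\beta}(\Q)\rangle\subseteq\langle X_{\alpha,\beta}(\Q)\rangle\subseteq S(\Q),$$
forcing $\langle X_{\alpha,\beta}(\Q)\rangle=S(\Q)$. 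Reading this with $\beta=\Q$ proves statement (2), and with $\alpha=\Q$ proves statement (3).

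The computations here are all trivial; the one point that genuinely requires care is the bookkeeping of the inclusion chain. I would check each of the four definitions against the others to be certain that $S$ sits at the bottom and $W$ at the top for matching subscripts, because it is precisely this ordering that lets a single non-generation fact about $W$ (Theorem 4) and a single generation fact about $S$ (Theorem 7) dispatch all four families at once. No new group-theoretic argument beyond Theorems 4 and 7 is needed.
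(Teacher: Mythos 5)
Your proposal is correct and follows essentially the same route as the paper: it establishes the definitional inclusions $S_{\alpha,\beta}(\Q)\subseteq K_{\alpha,\beta}(\Q)\subseteq W_{\alpha,\beta}(\Q)$ and $S_{\alpha,\beta}(\Q)\subseteq R_{\alpha,\beta}(\Q)\subseteq W_{\alpha,\beta}(\Q)$, then derives statement (1) from Theorem 4 applied to the largest set $W$, and statements (2) and (3) from Theorem 7 applied to the smallest set $S$. Your write-up is in fact somewhat more explicit than the paper's about which relaxation of the definitions yields each inclusion and about how generation propagates upward and non-generation downward.
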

\begin{proof}
Fix $\alpha, \beta$. Observe that by definition
$$S_{\alpha, \beta}(\Q) \subseteq K_{\alpha, \beta}(\Q) \subseteq W_{\alpha, \beta}(\Q) \text{ and } S_{\alpha, \beta}(\Q) \subseteq R_{\alpha, \beta}(\Q) \subseteq W_{\alpha, \beta}(\Q).$$ Then the first case follows from Theorem 4 and the second and the third cases follow from Theorem 7.
\end{proof}

\section{Partial case of $\Q = \mathbb{Z}$}
The case of a countable set is especially interesting, because here we can deal with permutations which act on $\Q$ as {\it cycles}. It is, obviously, stronger than our previous definition and this case requires another study. We consider orbits (cycles) $\mathcal{O}_f(x) = \{f^k (x) \mid k \in \mathbb{Z}\}$, so we obtain three such sets:
\begin{enumerate}
    \item A set of all local finite permutations $LF = \{ f \in S(\Q) \mid \text{for all } x \in \Q \text{ cycle } \mathcal{O}_f(x) \text{ is finite}\}$. As an example one could take a permutation  $..(12)(34)(56)..$
    \item A set of all ringed permutations $R = \{f \in S(\Q) \mid \text{the set of all the orbits for } f \text{ is finite} \}$. As an example one could take a permutation $f(n) = n+1$.
    \item A set of all wild permutations  $W = \{f \in S(\Q) \mid \text{There exists infinitely many countable cycles } \mathcal{O}_f(x)\}$. In order to take this permutation one could view $\mathbb{Z}$ as an infinite union of a countable set.
\end{enumerate}

\begin{lemma}
$R\subseteq \langle LF \rangle$.
\end{lemma}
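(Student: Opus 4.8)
The plan is to reduce the claim to a single concrete identity: that the bi-infinite shift on $\mathbb{Z}$ factors as a product of two involutions. Given $f \in R$, I would first split $\Q$ along the $f$-orbits. By definition of $R$ there are only finitely many orbits $O_1, \dots, O_k$; let $F$ be the union of the finite orbits and $J = J_1 \cup \dots \cup J_m$ the union of the (finitely many) infinite ones. Both $F$ and $J$ are $f$-invariant, and the restriction $f_F$ (acting as $f$ on $F$ and as the identity elsewhere) already lies in $LF$, since all of its cycles are finite. Hence $f = f_F \circ f_J$, where $f_J$ agrees with $f$ on $J$ and is the identity off $J$, and it suffices to write $f_J$ as a finite product of locally finite permutations.

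The heart of the argument is the analysis of a single infinite orbit. Each $J_i$, together with the action of $f$, is a bi-infinite cycle, i.e. it is isomorphic as an $f$-set to $\mathbb{Z}$ with the shift $\sigma \colon n \mapsto n+1$. I would exhibit the factorization of $\sigma$ explicitly: let $a \colon n \mapsto -n$ and $b \colon n \mapsto 1-n$. Both are involutions (every cycle has length at most $2$), hence both lie in $LF$, and a direct check gives
\[
b(a(n)) = b(-n) = 1-(-n) = n+1 = \sigma(n),
\]
so $\sigma = b \circ a$. Transporting this identity through the isomorphism $J_i \cong \mathbb{Z}$ produces involutions $a_i, b_i$ supported on $J_i$ with $f|_{J_i} = b_i \circ a_i$.

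Finally I would assemble these local factorizations. Because the $J_i$ are pairwise disjoint and there are only finitely many of them, the permutations $g_1 = \prod_{i=1}^{m} a_i$ and $g_2 = \prod_{i=1}^{m} b_i$ (each acting as the identity off $\bigcup_i J_i$) are well-defined involutions, so $g_1, g_2 \in LF$, and $f_J = g_2 \circ g_1$. Combining, $f = f_F \circ g_2 \circ g_1$ is a product of three locally finite permutations, whence $f \in \langle LF \rangle$ and $R \subseteq \langle LF \rangle$.

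The only genuinely nontrivial step is the factorization of the shift into locally finite permutations; once the two-involution identity is in hand, the remainder is bookkeeping that works precisely because membership in $R$ forces finitely many orbits, keeping every product finite. The main obstacle to watch is therefore that one identity, and in particular choosing the two involutions with enough control over their fixed points that their composition is genuinely the shift rather than the shift twisted by an unwanted reflection.
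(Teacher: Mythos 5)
Your proof is correct and follows essentially the same route as the paper: the key step in both is writing the bi-infinite shift as a composition of two reflections of $\mathbb{Z}$ (the paper uses $n \mapsto -n-1$ followed by $n \mapsto -n$, you use $n \mapsto -n$ followed by $n \mapsto 1-n$), each of which is an involution and hence locally finite. Your explicit handling of the finite orbits and of the finitely many infinite orbits simply fills in the paper's ``without loss of generality'' reduction to a single infinite cycle.
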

\begin{proof} It suffices to prove that we can obtain an infinite cycle by the composition of local finite functions, so without loss of generality we can assume that $f \in R$ consists of only one infinite cycle, so
$$f=(\dots f^{-2}(x_0), f^{-1}(x_0), x_0, f(x_0), f^2(x_0), \dots)$$ 
Consider two local finite functions $f_1,f_2:\Q \longrightarrow \Q$, such that
$$f_1(f^n(x_0))=f^{-n-1}(x_0) \text{ and } f_2(f^n(x_0))=f^{-n}(x_0) \text{ for an integer }n,$$
and let us define them as identities on the complement of this cycle. \par 
In order to prove the statement, we need to show that $f^n(x_0)$ goes to $f^{n+1}(x_0)$ by $f_1f_2$ for all $n \in \mathbb{Z}$. But it is easy to see that
$$f_2(f_1(f^n(x_0)))=f_2(f^{-n-1}(x_0))=f^{n+1}(x_0).$$ 
Hence, $f = f_1f_2 \in \langle LF \rangle$.
\end{proof}

\begin{lemma}
$I_2\subseteq \langle R \rangle$.
\end{lemma}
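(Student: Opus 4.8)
The plan is to show that every involution is a \emph{finite} product of permutations with only finitely many orbits. Since an involution of $\mathbb{Z}$ is a (possibly infinite) product of disjoint transpositions together with fixed points, and $\langle R\rangle$ is closed under finite products, it suffices to treat two building blocks: a single transposition, and the full-support involution $t$ swapping each consecutive pair, $t(2k)=2k+1$, $t(2k+1)=2k$. The guiding idea is that although such an involution has infinitely many two-element (and singleton) orbits, multiplying it by a suitable power of the shift $s(n)=n+1$ fuses these infinitely many small orbits into finitely many infinite ones, so that both factors land in $R$.

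First I would handle a single transposition, say $(0,1)$. Writing $(0,1)=s\circ b$ with $b=s^{-1}\circ(0,1)$, a direct check shows that $b$ fixes $0$ and acts on $\mathbb{Z}\setminus\{0\}$ as a single bi-infinite orbit; thus $b$ has exactly two orbits while $s$ has one, so both lie in $R$ and $(0,1)\in\langle R\rangle$. Consequently every involution with only finitely many transpositions, being a finite product of disjoint transpositions, lies in $\langle R\rangle$. Next I would treat $t$: here the naive factorization $t=s\circ(s^{-1}t)$ fails, because $s^{-1}t$ fixes every even integer and so has infinitely many orbits. Replacing $s$ by $s^2$ repairs this: in $t=s^{2}\circ(s^{-2}t)$ the factor $s^2$ has two orbits (the evens and the odds), while $s^{-2}t$ is fixed-point-free and its orbits recombine the residues modulo $4$ into just two infinite orbits. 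Hence both factors are ringed and $t\in\langle R\rangle$.

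It remains to pass from these representatives to an arbitrary involution. The clean device is that $\langle R\rangle$ is a normal subgroup of $S(\mathbb{Z})$: conjugation relabels the orbit partition bijectively and so preserves the number of orbits, whence $R$ is invariant under conjugation and $\langle R\rangle\trianglelefteq S(\mathbb{Z})$. Every involution is conjugate to one of the explicitly treated forms (matching the number of transpositions and of fixed points), so normality carries it into $\langle R\rangle$ as well, proving $I_2\subseteq\langle R\rangle$.

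The main obstacle is the one already visible above: arranging that \emph{both} factors have finitely many orbits. The temptation is to work inside the group $\langle s,r\rangle$ generated by the shift and the reflection $r(n)=-n$, but this is the infinite dihedral group, whose only ringed elements are the nonzero powers $s^k$ (each with $k$ orbits) --- every reflection there is an involution with infinitely many two-element orbits. Any factorization confined to this group therefore leaves one factor non-ringed, which is exactly why the $s^{1}$ attempt fixes half of $\mathbb{Z}$; the cure is to use genuinely mixing factors such as $s^{\pm2}$, whose orbit pattern interleaves with that of the involution and collapses to finitely many classes. Finally, I note a shortcut: since the shift $s$ already lies in $R$ and has full support, normality together with the Schreier--Ulam theorem (Theorem 3) forces $\langle R\rangle=S(\mathbb{Z})$ outright, giving $I_2\subseteq\langle R\rangle$ at a stroke; the explicit factorizations above are the constructive counterpart, in the spirit of Lemma 10.
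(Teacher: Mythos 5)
Your explicit computations are correct --- $(0,1)=s\circ\bigl(s^{-1}(0,1)\bigr)$ with the second factor having exactly two orbits, and $t=s^{2}\circ(s^{-2}t)$ with $s^{-2}t$ having exactly the two orbits $\{n\equiv 0,3 \pmod 4\}$ and $\{n\equiv 1,2\pmod 4\}$ --- and so is the observation that $R$ is conjugation-invariant, hence $\langle R\rangle\trianglelefteq S(\mathbb{Z})$. But the final conjugacy step of your constructive argument has a genuine gap. The conjugacy class of an involution of $\mathbb{Z}$ is determined by \emph{both} the number of $2$-cycles \emph{and} the number of fixed points (your own parenthesis concedes this), and you only exhibit representatives for two of the possible types: finitely many transpositions with cofinitely many fixed points, and infinitely many transpositions with no fixed points. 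An involution such as the one swapping $2k\leftrightarrow 2k+1$ for $k\geq 0$ and fixing every negative integer has infinitely many transpositions \emph{and} infinitely many fixed points; it is conjugate to neither of your representatives, so normality does not deliver it. (Your method does extend --- for the involution swapping $4k\leftrightarrow 4k+1$ and fixing $4k+2,4k+3$ the factor obtained by composing with $s^{-2}$ again has exactly two orbits --- but as written these classes are simply not covered.)

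Your closing ``shortcut'' is, however, a complete proof on its own and rescues the result: $\langle R\rangle$ is normal, contains the shift $s$ with $M_s=\mathbb{Z}$, and by the Schreier--Ulam theorem (Theorem 2, not Theorem 3) every proper normal subgroup consists only of permutations with support of smaller cardinality; hence $\langle R\rangle=S(\mathbb{Z})\supseteq I_2$. This is not circular --- it uses only the cited classification, exactly as the paper's own Theorem 8 does --- but it is a genuinely different route from the paper's. The paper instead factors any involution with infinitely many $2$-cycles explicitly as a product $g_1g_2$ of two bi-infinite cycles (absorbing the fixed points into those cycles, which is precisely how it avoids the case split you are missing), and reduces the finite-support case to that one; this keeps the argument constructive and inside the intended chain $\langle I_2\rangle\subseteq\langle R\rangle\subseteq\langle LF\rangle\subseteq\langle W\rangle$. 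Your shortcut is shorter and in fact yields $\langle R\rangle=S(\mathbb{Z})$ in one stroke, at the price of bypassing the explicit combinatorics; if you keep the constructive version as the main argument, you must add representatives for the missing conjugacy classes.
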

\begin{proof}
Fix $f\in I_2$ and consider two cases: when $f$ is equal to a union of  infinitely many transpositions and when $f$ is equal to a union of  finitely many transpositions.\par 
In the first case $f$ can be represented by: 
$$f=(x_1^1, x_1^2)(x_2^1, x_2^2)(x_3^1, x_3^2)(x_4^1, x_4^2)\dots$$ We can assume that $f$ does not contain a cycle of order 1: otherwise one can add a set of all the fixed elements of $f$ as a cycle to $g_1$ (this function will be described later), and add an inversed cycle to $g_2$. Let us now define two ringed functions $g_1, g_2$:
$$g_1=(\dots x_4^2, x_3^1, x_2^2, x_1^1, x_2^1, x_1^2, x_4^1, x_3^2, \dots),$$
$$g_2=(\dots x_3^2, x_4^2, x_1^2, x_2^2, x_1^1,x_2^1, x_3^1, x_4^1, \dots).$$
One can check that $g_1g_2=f$ by straightforward calculations.\par
In order to obtain $f$ in the second case we only need to construct such a function $g_1$ which consists of an infinite number of transpositions, such that the set of orbits of $f$ lies in the set of orbits of $g_1$, and also we need to construct a function $g_2$, such that $M_{g_1}\setminus M_{g_2}=M_f$ and $M_{g_2}\bigcup M_f=M_{g_1}$. Obviously, the composition $g_1g_2$ is equal to $f$, hence $f$ could be obtained as a finite composition of ringed functions.
\end{proof}

\begin{lemma}
$LF\subseteq \langle W\rangle$.
\end{lemma}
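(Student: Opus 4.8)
The cleanest route is to avoid explicit cycle surgery and instead combine the conjugation-invariance of $W$ with the Schreier--Ulam theorem. First I would observe that conjugation preserves cycle type, so if $f$ has infinitely many infinite cycles then so does $gfg^{-1}$ for every $g\in S(\Q)$; hence $W$ is a union of conjugacy classes and $\langle W\rangle$ is a normal subgroup of $S(\Q)$. Next I would exhibit a single $f_0\in W$ with $M_{f_0}=\Q$ --- for instance, split $\Q$ into infinitely many pairwise disjoint copies of $\mathbb{Z}$ and let $f_0$ act as the shift $n\mapsto n+1$ on each copy --- so that $\langle W\rangle$ contains an element of full support.

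Now I would invoke Theorem 2: every proper normal subgroup of $S(\Q)$ consists of permutations $f$ with $|M_f|<\Q$, so a normal subgroup containing the full-support element $f_0$ cannot be proper. Therefore $\langle W\rangle=S(\Q)$, and in particular $LF\subseteq S(\Q)=\langle W\rangle$. This is exactly the mechanism used in the proof of Theorem 8 (normal closure together with Schreier--Ulam), and it has the pleasant side effect of subsuming the chain assembled from the two preceding lemmas, reproving at one stroke that each of $LF$, $R$, $W$ generates $S(\Q)$.

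If instead one wants a construction in the explicit spirit of the two preceding lemmas, the plan would be to write a locally finite $f$ as a finite product of wild permutations by partitioning $\Q$ into countably many disjoint countably infinite tracks, each a union of whole (finite) $f$-orbits, padding with fixed points when the support of $f$ is finite, and then realizing the restriction of $f$ to each track inside the infinite cycles of the factors. The step I expect to be the main obstacle is precisely this cycle surgery: a product of two full infinite cycles cannot be an arbitrary permutation (the finite analogue is obstructed by parity), so one cannot naively demand ``two infinite cycles per track'' and must instead either allow finitely many wild factors or route the infinite cycles of each factor through the region where $f$ acts trivially, so that the complementary factor provably retains infinitely many infinite cycles. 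Verifying the wildness of every factor, rather than the identity $w_1w_2=f$ itself, is where the real work lies --- which is exactly why the normal-closure argument above is to be preferred.
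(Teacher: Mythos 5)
Your argument is correct, but it takes a genuinely different route from the paper. The paper proves this lemma by explicit cycle surgery: for a locally finite $f$ it threads each finite cycle $(x_1,\dots,x_n)$ through a pair of mutually cancelling infinite cycles built from auxiliary points $a_i$, eliminating the odd-indexed orbits with one pair $g_1,g_2$ of wild permutations and the even-indexed orbits with a second pair $h_1,h_2$, so that $f=h_1h_2g_1g_2$, with separate patches for the cases $|\Q\setminus M_f|$ finite or infinite and for $f$ of finite support. You instead note that $W$ is a union of conjugacy classes (conjugation preserves the orbit structure), so $\langle W\rangle$ is normal; it contains a full-support element (disjoint shifts on countably many copies of $\mathbb{Z}$); hence by Theorem 2 it cannot be a proper normal subgroup, and $\langle W\rangle=S(\Q)\supseteq LF$. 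This is exactly the mechanism of the paper's proof of Theorem 8, and there is no circularity since you use neither Lemmas 8--9 nor Theorem 10. Your route is shorter, sidesteps precisely the obstruction you identify (a product of two infinite cycles cannot realize an arbitrary permutation, which is why the paper needs four factors and the auxiliary points), and, as you observe, applies verbatim to $LF$ and $R$, rendering the whole chain of Lemmas 8--10 redundant for Theorem 10. What it gives up is the quantitative and constructive content of the paper's argument --- every locally finite permutation is exhibited as a product of at most four wild ones, a bound the normal-closure argument does not yield --- and it leans on the Schreier--Ulam classification where the paper's proof of this particular lemma is self-contained. Both proofs are valid.
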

\begin{proof}
Fix $f\in LF$, and let $f$ be not finite. We need to prove that $f\in\langle W\rangle$.\par
Consider two cases: when $\Q \setminus M_f$ is finite and when $\Q \setminus M_f$ is infinite.
In the first case $f$ could be obtained as a composition of four wild permutations. In order to prove it, we only need to understand how we could obtain a finite cycle $(x_1, x_2, \dots , x_n)$. Let us take two infinite cycles: the cycle
$$(\dots a_{-1}, a_0,, x_1, x_2, \dots, x_n, a_1, a_2, \dots)$$ 
and the cycle
$$(\dots, a_2, a_1, x_n, a_0, a_{-1}, \dots),$$ 
where $a_i$ are some elements out of the cycle, and it is obvious that the number of such elements is infinite. Let us count the cycles of $f$, whose length is greater than $2$ by integers (we can do it anyway, because all of the orbits are finite, so the number of cycles is infinite, and the number of cycles with a cardinality greater than 2 is also infinite, because $f$ is not finite). One needs to construct functions $g_1, g_2$ by such rule: let us view a set of all elements from even cycles (it is, obviously, an infinite set) as a disjoint infinite union of infinite subsets. For any odd cycle $(x_1, x_2, \dots , x_n)$ we fix only one of these (infinite) sets and take $a_i$-s from this set. Functions $g_1$ and $g_2$ consist of cycles $(\dots a_{-1}, a_0,, x_1, x_2, \dots, x_n, a_1, a_2, \dots)$ and $(\dots, a_2, a_1, x_n, a_0, a_{-1}, \dots)$ accordingly. So we "eliminated" all odd cycles.\\
Then one needs to construct functions $h_1, h_2$, by the same rule: we need to "eliminate" all even cycles, using elements from odd cycles as $a_i$-s. All such functions are wild, because they consist of infinitely many cycles (one cycle for one orbit, and the number of orbits is infinite), and all such cycles are countable by construction. So $h_i \in W$ and $g_i \in W$ and
$$f = h_1h_2 g_1 g_2.$$
In the second case one can obtain $f$ by the composition of two wild functions: because $\Q \setminus M_f$ is infinite, we could take $a_i$-s from this set.\par
Finally, in the exceptional case, when $f$ is finite, this algorithm gives us only ringed functions (because there are only finitely many cycles of length greater than 2). But one can easily make wild functions from them: we only need to add infinitely many cycles from the elements of $\Q \setminus M_f$ to $g_1$ and all the cycles inverse to them to $g_2$.

\end{proof}

\begin{theorem} Let $\Q$ be a countable set. Then any of the sets $LF$, $R$ and $W$ is a generating set for $S(\Omega)$.
\end{theorem}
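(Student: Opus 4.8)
The plan is to assemble the three preceding lemmas together with Theorem 8 into a single chain of inclusions, using only the elementary fact that the subgroup-generation operator $X \mapsto \langle X \rangle$ is monotone and idempotent. The principle I will invoke repeatedly is this: if $A \subseteq \langle B \rangle$, then $\langle A \rangle \subseteq \langle \langle B \rangle \rangle = \langle B \rangle$. This lets me propagate the equality $\langle I_2 \rangle = S(\Omega)$ outward through each containment lemma.

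First I would recall that by Theorem 8 (applied with $n = 2$) we already have $\langle I_2 \rangle = S(\Omega)$. Since $LF$, $R$, and $W$ are all subsets of $S(\Omega)$, the inclusions $\langle LF \rangle \subseteq S(\Omega)$, $\langle R \rangle \subseteq S(\Omega)$, $\langle W \rangle \subseteq S(\Omega)$ are automatic, so it suffices to establish the reverse containments $S(\Omega) \subseteq \langle R \rangle \subseteq \langle LF \rangle \subseteq \langle W \rangle$.

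Next I would run the chain forward. From the lemma $I_2 \subseteq \langle R \rangle$ and monotonicity, $S(\Omega) = \langle I_2 \rangle \subseteq \langle R \rangle$, so $\langle R \rangle = S(\Omega)$ and $R$ is generating. Feeding this into the lemma $R \subseteq \langle LF \rangle$ gives $S(\Omega) = \langle R \rangle \subseteq \langle LF \rangle$, so $LF$ is generating. Finally, the lemma $LF \subseteq \langle W \rangle$ yields $S(\Omega) = \langle LF \rangle \subseteq \langle W \rangle$, so $W$ is generating as well, which exhausts the three cases.

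The point worth stressing is that no real obstacle remains at this stage: the genuine combinatorial work---synthesizing an infinite cycle from two locally finite permutations, writing an arbitrary involution as a product of two ringed permutations, and splitting locally finite permutations into wild ones---was already done in the three lemmas. This final theorem is the bookkeeping step that closes the loop $I_2 \to R \to LF \to W$. The only thing I would watch carefully is the orientation of each inclusion, so that the generating property flows forward along the chain and each lemma is applied in the direction that preserves, rather than reverses, the containment $S(\Omega) \subseteq \langle \cdot \rangle$.
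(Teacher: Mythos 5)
Your proposal is correct and follows essentially the same route as the paper: both arguments chain the three lemmas into $\langle I_2 \rangle \subseteq \langle R \rangle \subseteq \langle LF \rangle \subseteq \langle W \rangle \subseteq S(\Omega)$ and invoke Theorem 8 to conclude that every term in the chain equals $S(\Omega)$. Your explicit appeal to monotonicity and idempotence of $X \mapsto \langle X \rangle$ just makes precise the step the paper leaves implicit.
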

\begin{proof}
As we know from the previous Lemmas, $\langle I_2 \rangle \subseteq \langle R \rangle \subseteq \langle LF \rangle \subseteq  \langle W \rangle \subseteq S(\Q)$, and $\langle I_2 \rangle = S(\Q)$ by Theorem 8, whence the result follows.
\end{proof}

\end{document}